\theoremstyle{definition}{
	\newtheorem{Def}{{\rm Definition}}
	\newtheorem{Ex}{{\rm Example}}
	\newtheorem{Rem}{{\rm Remark}}
	\newtheorem{Prob}{{\rm Problem}}
}
\theoremstyle{plain}
{
	\newtheorem{Cor}{Corollary}
	\newtheorem{Prop}{Proposition}
	\newtheorem{Thm}{Theorem}
	\newtheorem{MainThm}{Main Theorem}

}
\begin{document}
	\title[Smooth maps like special generic maps]{Smooth maps like special generic maps}
	\author{Naoki Kitazawa}
	\keywords{Special generic maps. Morse-Bott functions. Homology and cohomology. \\
		\indent {\it \textup{2020} Mathematics Subject Classification}: Primary~57R45. Secondary~57R19.}
	\address{Institute of Mathematics for Industry, Kyushu University, 744 Motooka, Nishi-ku Fukuoka 819-0395, Japan\\
		TEL (Office): +81-92-802-4402 \\
		FAX (Office): +81-92-802-4405 \\
	}
	\email{n-kitazawa@imi.kyushu-u.ac.jp}
	\urladdr{https://naokikitazawa.github.io/NaokiKitazawa.html}
	
	\begin{abstract}
		In our paper, we introduce {\it special-generic-like} maps or {\it SGL} maps as smooth maps, present their fundamental algebraic topological and differential topological theory and give non-trivial examples. 
		
		The new class generalize the class of so-called {\it special generic} maps. Special generic maps are smooth maps which are locally projections or the product maps of Morse functions and the identity maps on disks. Morse functions with exactly two singular points on spheres or Morse functions in Reeb's theorem are simplest examples. Special generic maps and the manifolds of their domains have been studied well. Their structures are simple and this help us to study explicitly. As important properties, they have been shown to restrict the topologies and the differentiable structures of the manifolds strongly by Saeki and Sakuma, followed by Nishioka, Wrazidlo and the author. To cover wider classes of manifolds as the domains, the author previously introduced a class generalizing the class of special generic maps and smaller than our class: {\it simply generalized special generic maps}.


		
	\end{abstract}
	
	
	\maketitle
	\section{Introduction.}
	\label{sec:1}
	{\it Special generic} maps are smooth maps which are locally projections or the product maps of Morse functions and the identity maps on disks. Morse functions with exactly two singular points on spheres or Morse functions in Reeb's theorem are simplest examples. Canonical projections of so-called {\it unit spheres} are also simplest examples. 
	
    Pioneering studies are \cite{burletderham, calabi, furuyaporto} for example. Since the 1990s, their algebraic topological and differential topological properties have been studied by Saeki and Sakuma (\cite{saeki1,saeki2,saekisakuma1,saekisakuma2}), followed by Nishioka (\cite{nishioka}), Wrazidlo (\cite{wrazidlo1,wrazidlo2}) and the author (\cite{kitazawa1,kitazawa2,kitazawa3,kitazawa4,kitazawa5,kitazawa6,kitazawa7,kitazawa8,kitazawa11}).

    Some elementary manifolds such as ones represented as connected sums of the products of two spheres admit natural special generic maps in considerable cases. On the contrary, the differentiable structures of spheres and some elementary manifolds admitting special generic maps are restricted strongly. Homology groups of the manifolds are also restricted in considerable cases. The author has started studies on the cohomology rings of the manifolds.
    These studies are due to the fact that special generic maps have simple and nice structures.
    
    We introduce some fundamental notions, terminologies and notation. ${\mathbb{R}}^k$ is the $k$-dimensional Euclidean space, which is a simplest $k$-dimensional smooth manifold for an arbitrary positive integer $k$. It is, by considering a standard Euclidean metric, a Riemannian manifold. $||x|| \geq 0$ denotes th distance between $x \in {\mathbb{R}}^k$ and the origin $0$. $\mathbb{R}:={\mathbb{R}}^1$ and $\mathbb{Z}$ denotes the ring of all integers.
    
    $S^k:=\{x \in {\mathbb{R}}^{k+1} \mid ||x||=1\}$ denotes the $k$-dimensional unit sphere. It is a $k$-dimensional smooth compact and connected submanifold if $k \geq 2$ and the two-point set with the discrete topology if $k=1$.  $D^k:=\{x \in {\mathbb{R}}^{k} \mid ||x|| \leq 1\}$ denotes the $k$-dimensional unit disk for an arbitrary integer $k \geq 1$. It is a $k$-dimensional smooth compact and connected submanifold.
    
    For a non-empty topological space $X$ having the structure of a cell complex, we can define the dimension uniquely, denoted by $\dim X$. A topological manifold is known to have the structure of a CW complex. A smooth manifold is known to have the structure of a polyhedron and more precisely, in some canonical way we can define the unique PL structure. This is seen as a {\it PL manifold} canonically. It is well-known that topological manifolds whose dimensions are at most $3$ have the unique structures of polyhedra and that topological spaces homeomorphic to polyhedra whose dimensions are at most $2$ have the unique structures of polyhedra. Theory related to such uniqueness is discussed in \cite{moise} for example. 
    
    For a differentiable map $c:X \rightarrow Y$, a point $x \in X$ is a {\it singular} point if the rank of the differential ${dc}_x$ there is smaller than the minimum between $\dim X$ and $\dim Y$. The singular set of $c$ is the set of all singular points of $c$ and let $S(c)$ denote the set.
   
    We define and discuss special generic maps later. However, we explain about these maps shortly. A special generic map is a smooth map from an $m$-dimensional manifold with no boundary into $n$-dimensional one with $m \geq n$. If the manifold of the domain is closed, then the image is regarded as an $n$-dimensional smoothly immersed compact manifold. The preimage of each point in the interior of the immersed manifold is diffeomorphic to $S^{m-n}$. This gives a projection over the interior of the manifold. The preimage of each point in the boundary is a point. Around each point in the boundary, it is the product map of a Morse function with exactly one singular point on a disk of dimension $m-n+1$ and the identity map on a disk of dimension $n-1$. Propositions \ref{prop:1} and \ref{prop:2}, presented in the next section, are on this.

    \begin{Prob}
    	\label{prob:1}
    	 Can you formulate new nice classes of smooth maps having simple and nice structures and properties special generic maps have?
    	 \end{Prob}
     \begin{Prob}
     	\label{prob:2}
    	Can the classes of Problem \ref{prob:1} cover wider classes of manifolds to study as the domains of these maps?
    \end{Prob}
Essentially same problems are also asked in the previous preprint of the author \cite{kitazawa9}. There the author introduced {\it simply generalized special generic maps}. In our paper, we generalize this class as the class of {\it special-generic-like} maps or {\it SGL} maps.

We explain about simply generalized special generic maps shortly. The preimage of a point in the interior of the $n$-dimensional smoothly immersed manifold is replaced by the product of manifolds diffeomorphic to unit spheres. Around each point in the boundary, a Morse function on a disk is replaced by the composition of a projection onto the disk with a Morse function with exactly one singular point there. Each preimage of the projection is the product of manifolds diffeomorphic to unit spheres. The Morse functions are replaced by {\it Morse-Bott} functions. For {\it Morse-Bott} functions, see a related pioneering study \cite{bott} for example.

Note that this class respects local structures of so-called {\it moment maps} on so-called {\rm (}symplectic{\rm )} {\it toric} manifolds. The class of moment maps and that of symplectic toric manifolds are important in symplectic geometry and toric geometry. In most cases, special generic maps are not admitted by such manifolds. This has been conjectured by us and the author has shown results seeming to be related to this. \cite{kitazawa4,kitazawa6} show the non-existence of special generic maps on complex projective spaces except for the $1$-dimensional case or $2$-dimensional spheres. They are simplest (symplectic) toric manifolds. \cite{delzant} is a pioneering study on symlectic toric manifolds. See also \cite{buchstaberpanov} for example. Exposition related to this history is also in \cite{kitazawa9}
    
In our new class, the preimage of each point in the interior of the immersed manifold and Morse(-Bott) functions used around the boundary are generalized.
 
We present our main results as Main Theorems. Hereafter, elementary algebraic topology, more precisely, elementary (co)homology theory is important. We do not explain about this rigorously or systematically and we also expect we have related knowledge. For related studies, see \cite{hatcher} and we also abuse notions, terminologies and notation here or ones which seem to be generally used or familiar to us.
\begin{MainThm}[Theorem \ref{thm:2}]
	\label{mthm:1}
	Let $k>0$ be a positive integer.
	Let $m>n$ be an arbitrary positive integers satisfying $m-n \geq k$. Let $f:M \rightarrow N$ be an SGL map from an $m$-dimensional closed and connected manifold into an $n$-dimensional connected manifold $N$ with no boundary which is represented as the composition of a smooth surjection $q_f:M \rightarrow W_f$ onto a compact and $k$-connected connected manifold $W_f$ with a smooth immersion $\bar{f}:W_f \rightarrow N$. Suppose that
	a family $\{p_j\} \subset {\rm Int}\ W_f$ of finitely many points satisfying the following conditions exists .
	 \begin{enumerate}
	 	\item The preimage ${q_f}^{-1}(p_j)$ for $p_j \in {\rm Int}\ W_f$ is diffeomorphic to a closed and {\rm (}$k-1${\rm )}-connected manifold $F$.
	 	\item For a smooth curve ${\alpha}_{p_j}:[0,1] \rightarrow W_f$ satisfying ${\alpha}_{p_j}(0)=p_j$, ${\alpha}_{p_j}((0,1))
	\subset {\rm Int}\ W_f$ and ${\alpha}_{p_j}(1) \in \partial W_f$ and a so-called "transversality", presented later in several situations, the inclusion ${{\alpha}_{p_j}}^{-1}(0) \subset {{\alpha}_{p_j}}^{-1}([0,1])$ gives the kernel ${\rm Ker} \ {{\alpha}_{p_j}}_{\ast}$ of the naturally defined homomorphism between the $k$-th homotopy groups ${\pi}_k({{\alpha}_{p_j}}^{-1}(0))$ and ${\pi}_k({{\alpha}_{p_j}}^{-1}([0,1]))$.
	\item The union ${\bigcup}_j {\rm Ker}\ {{\alpha}_{p_j}}_{\ast}$ generates ${\pi}_k(F)$.
	\end{enumerate} 
Then $M$ is $k$-connected.
	\end{MainThm}
\begin{MainThm}[Theorem \ref{thm:3}]
		\label{mthm:2}
	Let $n$ be an arbitrary positive integer. Suppose that an $n$-dimensional connected manifold $N$ with no boundary and a smooth immersion $\bar{f}:\bar{N} \rightarrow N$ of an n-dimensional compact and simply-connected manifold $\bar{N}$ which has at least two boundary components are given. Let $F$ be a closed, connected and orientable surface which is not a sphere.

Then we have an SGL map $f:M \rightarrow N$ from some $m$-dimensional closed and simply-connected manifold $M$ into $N$ for the case $k=1$ of Main Theorem \ref{mthm:1} with the notation being abused and $W_f$ and $\bar{N}$ being identified suitably as smooth manifolds.
\end{MainThm}

We explain about the content of our paper. The second section is for preliminaries. There we also review special generic maps for example as fundamental objects. We define our new class rigorously in the third section. The fourth section presents our proof of Main Theorems with some examples. Main Theorems \ref{mthm:1} and \ref{mthm:2} are presented again in revised forms. Main Theorems \ref{mthm:3}, \ref{mthm:4} and \ref{mthm:5} are also presented as another new result. We close our paper by Main Theorem \ref{mthm:6} as a meaningful remark on singularities of the maps in Main Theorems \ref{mthm:2}, \ref{mthm:3}, \ref{mthm:4} and \ref{mthm:5}.	\ \\
\ \\
	{\bf Conflict of interest.} \\
	The author is a member of the project JSPS KAKENHI Grant Number JP22K18267 "Visualizing twists in data through monodromy" (Principal Investigator: Osamu Saeki). Our present study is supported by the project. \\
	\ \\
	{\bf Data availability.} \\
	Data essentially supporting our present study are all contained in our present paper.
	
	\section{Preliminaries.}
\subsection{Diffeomorphisms.}
	A {\it diffeomorphism} between smooth manifolds means a smooth map which has no singular points and which is a homeomorphism. A {\it diffeomorphism on a manifold} is a diffeomorphism from the (smooth) manifold onto itself. Two manifolds are {\it diffeomorphic} if and only if there exists a diffeomorphism between these manifolds. This naturally gives an equivalence relation on the family of all smooth manifolds with their corners being eliminated in a well-known canonical way. 
	These operations always give mutually diffeomorphic manifolds with no corners for a fixed manifold.
	We can define {\it PL homeomorphic manifolds} using PL homeomorphisms or piecewise smooth homeomorphisms similarly.
	
	The {\it diffeomorphism group} of a manifold is the space consisting of all diffeomorphisms on the manifold where the so-called {\it Whitney $C^{\infty}$ topology} is given as its topology. It is also a topological group and a so-called {\it infinite dimensional Lie groups}. 
	The {\it Whitney $C^{\infty}$ topologies} on spaces of smooth maps between smooth manifolds are natural and important topologies. Such spaces are fundamental and important spaces in the singularity theory of differentiable maps and (applications to) differential topology of manifolds. For this see \cite{golubitskyguillemin}.
\subsection{Smooth bundles and linear bundles.}		
	A {\it smooth} bundle means a bundle whose fiber is a smooth manifold and whose structure group is regarded as some subgroup of the diffeomorphism group.
	{\it Linear} bundles form an important subclass. A {\it linear} bundle is a bundle whose fiber is a Euclidean space, a unit disk, or a unit sphere, and whose structure group consists of linear transformations. Note that we can define linear transformations here naturally.
	
	To know general theory of bundles systematically, see \cite{steenrod} for example. For linear bundles and so-called characteristic classes of them, see \cite{milnorstasheff} for example. 
	
	\subsection{Special generic maps.}
	\begin{Def}
		\label{def:1}
		A smooth map $c:X \rightarrow Y$ between two smooth manifolds with no boundaries is {\it special generic} if at some small neighborhood of each singular point $p \in X$, we can choose suitable local coordinates around $p$ and $c(p)$ and $c$ can be represented by $(x_1,\cdots,x_{\dim X}) \rightarrow (x_1,\cdots,x_{\dim Y-1},{\Sigma}_{j=1}^{\dim X-\dim Y+1} {x_{\dim Y+j-1}}^2)$ locally for the local coordinates.
	\end{Def}
	 A canonical projection of a unit sphere, mapping $(x_1,x_2) \in S^k \subset {\mathbb{R}}^{k+1}={\mathbb{R}}^{k_1} \times {\mathbb{R}}^{k_2}$ to $x_1 \in {\mathbb{R}}^{k_1}$, is a special generic map where $k \geq 1$, $k_1,k_2 \geq 1$ and $k=k_1+k_2$. To check that this is special generic maps is a kind of elementary exercises on smooth manifolds, smooth maps, Morse functions and differentiable maps. 

	\begin{Prop}[\cite{saeki1,saeki2}]
		\label{prop:1}
		Let $m \geq n \geq 1$ be integers. Given a special generic map $f:M \rightarrow N$ on an $m$-dimensional closed and connected manifold $M$ into an $n$-dimensional connected manifold $N$ with no boundary. This enjoys the following properties.
		\begin{enumerate}
			\item \label{prop:1.1}
			A suitable $n$-dimensional compact and connected smooth manifold $W_f$, a suitable smooth surjection $q_f:M \rightarrow W_f$ and a suitable smooth immersion $\bar{f}:W_f \rightarrow N$ exist and we have a relation $f=\bar{f} \circ q_f$. Furthermore, $q_f$ can be chosen as a smooth map whose restriction to the singular set $S(f)$ of $f$ is a diffeomorphism onto the boundary $\partial W_f \subset W_f$ where the manifold of the target is restricted.
			\item \label{prop:1.2}
			We have some small collar neighborhood $N(\partial W_f)$ of the boundary $\partial W_f \subset W_f$ with the following two properties.
			\begin{enumerate}
				\item \label{prop:1.2.1}
				The composition of the restriction of $q_f$ to the preimage ${q_f}^{-1}(N(\partial W_f))$ with the canonical projection to $\partial W_f$ is the projection of some linear bundle whose fiber is the unit disk $D^{m-n+1}$.
				\item \label{prop:1.2.2}
Suppose that $\partial W_f$ is not empty. The restriction of $q_f$ to the preimage of $W_f-{\rm Int}\ N(\partial W_f)$ is the projection of some smooth bundle whose fiber is the unit sphere $S^{m-n}$. In some specific cases, the bundle is regarded as a linear one and the case $m-n=0,1,2,3$ satisfies the conditions.
		\end{enumerate}
	\end{enumerate}
	\end{Prop}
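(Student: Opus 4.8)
The plan is to realize $W_f$ as the \emph{Stein factorization} (Reeb space) of $f$ and to equip it with a smooth structure read off from the fold normal form of Definition \ref{def:1}. First I would describe the singular set: at a singular point the map is locally $(x_1,\dots,x_m)\mapsto(x_1,\dots,x_{n-1},\sum_{j=1}^{m-n+1}x_{n+j-1}^2)$, whose differential has rank $n$ off the $(n-1)$-plane $\{x_n=\dots=x_m=0\}$, so $S(f)$ is a closed $(n-1)$-dimensional submanifold of $M$ and $f|_{S(f)}$ is a smooth immersion into $N$. I would then set $W_f:=M/\!\sim$, where $x\sim y$ exactly when $f(x)=f(y)$ and $x,y$ lie in the same connected component of $f^{-1}(f(x))$, taking $q_f$ to be the quotient map and $\bar f$ the induced factor; then $f=\bar f\circ q_f$ holds tautologically and $W_f$ is compact and connected as a continuous image of $M$.

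The substance of item (\ref{prop:1.1}) is to put a smooth structure on $W_f$ making $q_f$ smooth and $\bar f$ an immersion. Over an interior point with regular fiber, Ehresmann's theorem applied to the proper submersion $f|_{M\setminus S(f)}$ supplies a chart identifying a neighborhood in $W_f$ with an open subset of $N$ through $\bar f$, which is a local diffeomorphism there. Near the image of a fold point I would read off the chart from the normal form directly: the fiber over $(u,t)$ is the single sphere $\{|x|^2=t\}$ for $t>0$ and the point $\{0\}$ for $t=0$, so $q_f$ becomes $(u,x)\mapsto(u,|x|^2)$ with image the half-space $\{t\ge 0\}$. This exhibits $W_f$ near $\partial W_f$ as a smooth manifold with boundary, identifies $\partial W_f$ with the image of $S(f)$, and shows that $q_f|_{S(f)}$ is a diffeomorphism onto $\partial W_f$, giving (\ref{prop:1.1}).

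For item (\ref{prop:1.2.1}) I would run the normal form along a collar: locally $q_f$ is $(u,x)\mapsto(u,|x|^2)$, so over $\partial W_f\times[0,\epsilon)$ the preimage is $\partial W_f\times D^{m-n+1}$ and the composite of $q_f$ with the projection to $\partial W_f$ is the bundle projection; the quadratic form $\sum x_{n+j-1}^2$ forces the structure group into the orthogonal group, so these charts assemble into a linear $D^{m-n+1}$-bundle. For item (\ref{prop:1.2.2}), over $W_f\setminus\mathrm{Int}\,N(\partial W_f)$ there are no singular values, so $q_f$ restricts to a proper submersion with connected fibers; Ehresmann's theorem makes it a smooth fiber bundle, and since its fiber over points just inside the collar is a sphere while $\mathrm{Int}\,W_f$ is connected and free of singular values, every fiber is $S^{m-n}$. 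Linearity amounts to reducing the structure group from $\mathrm{Diff}(S^{m-n})$ to $O(m-n+1)$, and for $m-n\in\{0,1,2,3\}$ the inclusion $O(m-n+1)\hookrightarrow\mathrm{Diff}(S^{m-n})$ is a (weak) homotopy equivalence — in particular $\mathrm{Diff}(S^2)\simeq O(3)$ by Smale and $\mathrm{Diff}(S^3)\simeq O(4)$ by Hatcher — so the reduction exists in those cases.

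The hard part will be the smooth structure on $W_f$: the Stein factorization is a priori only a topological quotient, and one must verify that the submersion charts over the interior and the quadratic charts $t=|x|^2$ at the folds overlap \emph{smoothly}, not merely continuously, along the transition region, so that $\bar f$ is genuinely a smooth immersion. This compatibility, rather than any single local computation, is where the real work of Saeki's argument lies.
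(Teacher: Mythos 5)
Your proposal is correct and follows essentially the argument of the cited sources \cite{saeki1,saeki2}: the paper itself offers no proof of Proposition \ref{prop:1}, quoting it as known, and your reconstruction via the Stein factorization, the fold normal form for the boundary charts and the linear structure, and Ehresmann's theorem for the internal bundle (with the reductions ${\rm Diff}(S^2)\simeq O(3)$, ${\rm Diff}(S^3)\simeq O(4)$ for the low-codimension linearity claim) is the standard one. The only slip is that $f{\mid}_{M-S(f)}$ is not proper when $S(f) \neq \emptyset$, so Ehresmann's theorem should instead be applied to the restriction of $q_f$ to the preimage of $W_f-{\rm Int}\ N(\partial W_f)$, or to a saturated neighborhood of a single compact regular fiber component; this is an immediate fix and does not affect the argument.
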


\begin{Def}[E. g. \cite{kitazawa8}]
	\label{def:2}
	In Proposition \ref{prop:1}, we call the bundle in (\ref{prop:1.2.1}) the {\it boundary linear bundle} {\rm (}of $f${\rm )}. We call the bundle of (\ref{prop:1.2.2}) the {\it internal smooth bundle} {\rm (}of it{\rm )}.
\end{Def}
The following gives simplest special generic maps.

	\begin{Prop}[\cite{saeki1}]
		\label{prop:2}
Let $m \geq n \geq 1$ be integers. Let $\bar{N}$ be an $n$-dimensional smooth, compact and connected manifold whose boundary is not empty and $N$ an $n$-dimensional smooth connected manifold with no boundary. Assume also that a smooth immersion ${\bar{f}}_N:\bar{N} \rightarrow N$ is given.
			
		Then we have a suitable $m$-dimensional closed and connected manifold $M$ some special generic map $f:M \rightarrow N$ and have the following two.
		\begin{enumerate}
			\item The property {\rm (}\ref{prop:1.1}{\rm )} of Proposition \ref{prop:1} is enjoyed where $W_f$ and $\bar{N}$ are identified as smooth manifolds in a suitable way with the relation ${\bar{f}}_N=\bar{f}$.
			\item A boundary linear bundle and an internal smooth bundle of $f$ are trivial.
		\end{enumerate}
 
	\end{Prop}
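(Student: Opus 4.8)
The plan is to prove the proposition by an explicit fiberwise construction over $\bar{N}$, after which every asserted property is read off from the pieces used. I would identify $W_f$ with $\bar{N}$ from the outset. Fix a collar neighborhood $N(\partial \bar{N}) \cong \partial \bar{N} \times [0,1]$ of the boundary, with $\partial \bar{N} \times \{0\} = \partial \bar{N}$, and set $\bar{N}_0 := \bar{N} \setminus {\rm Int}\ N(\partial \bar{N})$, which is again diffeomorphic to $\bar{N}$ and whose boundary is identified with $\partial \bar{N} \times \{1\}$. The manifold $M$ will be assembled from a trivial sphere bundle over $\bar{N}_0$ and a trivial disk bundle over the collar, glued along their common sphere boundary.

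Over the interior piece $\bar{N}_0$ I would take the trivial bundle $\bar{N}_0 \times S^{m-n}$ with $q_f$ the first projection; this is the internal smooth bundle of Definition \ref{def:2}, trivial by construction. Over the collar I would take the trivial disk bundle $\partial \bar{N} \times D^{m-n+1}$ and define the quotient map there by the radial Morse model
\[
q_f(b,v) = (b, ||v||^2) \in \partial \bar{N} \times [0,1] = N(\partial \bar{N}),
\]
so that the central point $v=0$ is the unique singular fiber over each boundary point, while the level $||v||^2 = s$ is a round sphere of radius $\sqrt{s}$. Gluing the two pieces by the identity of $\partial \bar{N} \times S^{m-n}$, matching the boundary sphere $||v||=1$ of the disk bundle against the fiber over $\partial \bar{N}_0 = \partial \bar{N} \times \{1\}$, yields a closed manifold $M$ together with a smooth surjection $q_f : M \to \bar{N}$. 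I would then put $f := \bar{f}_N \circ q_f$ and $\bar{f} := \bar{f}_N$.

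The remaining work is verification. Smoothness of $q_f$ across the gluing locus follows because near $||v||=1$ the passage to polar coordinates $v = r\sigma$ is a diffeomorphism (the origin, where polar coordinates degenerate, is precisely the isolated singular fiber sent to the boundary), and in these coordinates the collar map reads $(b,r,\sigma) \mapsto (b,r^2)$, which matches the product projection of the interior piece under the substitution $s = r^2$; the usual corner smoothing is absorbed into the choice of collar. At a singular point $(b,0)$ the map $q_f$ in the coordinates above has exactly the normal form $(x_1,\dots,x_{n-1},\sum_{j=1}^{m-n+1} x_{n+j-1}^2)$ of Definition \ref{def:1}, and since $\bar{f}_N$ is an immersion, hence a local diffeomorphism onto its image, composing with it preserves this local form; thus $f$ is special generic. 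Connectedness of $M$ holds because $\bar{N}$ and the fiber $S^{m-n}$ are connected when $m>n$, whereas for $m=n$ the construction gives the double of $\bar{N}$ along its boundary, which is again connected. Property (\ref{prop:1.1}) of Proposition \ref{prop:1} is then immediate, as $S(f) = \partial \bar{N} \times \{0\}$ maps diffeomorphically onto $\partial W_f$ and $f = \bar{f} \circ q_f$ with $\bar{f} = \bar{f}_N$, and both bundles are trivial by construction, giving the second asserted property.

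The only genuinely delicate point is arranging the two bundle structures to agree smoothly along $\partial \bar{N} \times S^{m-n}$, so that $M$ is honestly a smooth closed manifold and $q_f$ is smooth there with the prescribed normal form; this is exactly where the explicit radial model $||v||^2$ and a careful treatment of the collar and of corners are needed. Everything else is forced by the construction, and the fact that $\bar{f}_N$ may fail to be injective never interferes, because only the local structure of $\bar{f}_N$ enters the verification that $f$ is special generic.
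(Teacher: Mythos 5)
Your construction is correct and is essentially the construction the paper itself indicates: the paper leaves Proposition \ref{prop:2} as an exercise citing \cite{saeki1} and sketches in Remark \ref{rem:1} exactly this recipe of a trivial $S^{m-n}$-bundle over the complement of a collar of $\partial\bar{N}$ glued via the identity to the product over the collar of a Morse function with one singular point on $D^{m-n+1}$ and the identity map. Your version simply supplies the explicit radial model and the verification details that the paper omits.
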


This is also regarded as an elementary exercise on smooth maps and differential topology of manifolds. Remark \ref{rem:1} with Example \ref{ex:2} gives related exposition.  

	Canonical projections of unit spheres are simplest special generic maps. We present another simplest example.
	\begin{Ex}
		\label{ex:1}
		Let $l$ be an arbitrary positive integer. Let $m$ and $n$ be integers satisfying the condition $m \geq n \geq 2$. Assume that an integer $1 \leq n_j \leq n-1$ is defined for each integer $1 \leq j \leq l$. We take a connected sum of $l>0$ manifolds in the smooth category where the $j$-th manifold is $S^{n_j} \times S^{m-n_j}$. Thus we have a smooth manifold $M_0$. We have a special generic map $f_0:M_0 \rightarrow {\mathbb{R}}^n$ as in Proposition \ref{prop:2}. More precisely, we have $f_0$ in such a way that the image is represented as a boundary connected sum of $l$ manifolds taken in the smooth category with the $j$-th manifold diffeomorphic to $S^{n_j} \times D^{n-n_j}$ 
		
	\end{Ex}
	
    Hereafter, a {\it homotopy sphere} is a smooth manifold homeomorphic to a (unit) sphere whose dimension is positive. 
	A {\it standard} sphere is a homotopy sphere diffeomorphic to some unit sphere. An {\it exotic sphere} is a homotopy sphere which is not diffeomorphic to any unit sphere. It is well-known that $4$-dimensional exotic spheres are still undiscovered. Except these $4$-dimensional cases, homotopy spheres are known to be PL homeomorphic to standard spheres where they are seen as the PL manifolds defined canonically. In this philosophy, $4$-dimensional exotic spheres are known to be not PL homeomorphic to standard spheres.
	
	As a kind of appendices, we present known results on special generic maps and manifolds admitting them in several situations.  
	
	\begin{Thm}[\cite{saeki1,saeki2}]
		\label{thm:1}
		\begin{enumerate}
			\item
			\label{thm:1.1}
			Let $m$ be an arbitrary integer satisfying $m \geq 2$.
			An $m$-dimensional closed and connected manifold $M$ admits a special generic map $f:M \rightarrow {\mathbb{R}}^2$ if and only if $M$ is either of the following two.
			\begin{enumerate}
			\item A homotopy sphere which is not a $4$-dimensional exotic sphere.
				\item A manifold
				represented as a connected sum of smooth manifolds each of which is the total space of some smooth bundle over $S^1$. Furthermore, the connected sum is taken in the smooth category and the fiber of each bundle here is a homotopy sphere which is not a $4$-dimensional exotic sphere.
				\end{enumerate}
		\item
		\label{thm:1.2}
			Let $m$ be an arbitrary integer greater than or equal to $4$. Let $M$ be an $m$-dimensional closed and simply-connected manifold $M$. If a special generic map $f:M \rightarrow {\mathbb{R}}^3$ exists, then $M$ is either of the following two.
			\begin{enumerate}
			\item A homotopy sphere which is not a $4$-dimensional exotic sphere.
				\item A manifold
				represented as a connected sum of smooth manifolds each of which is the total space of a smooth bundle over $S^2$. Furthermore, the connected sum is taken in the smooth category and the fiber of each bundle here is a homotopy sphere which is not a $4$-dimensional exotic sphere.
			\end{enumerate}
			In the case $m=4,5,6$, the converse is also true. In such a case, a fiber of each bundle is an {\rm (}$m-2${\rm)}-dimensional standard sphere and the total spaces of the bundles are replaced by the total spaces of linear bundles without changing the fibers and the base spaces.
			\item
			\label{thm:1.3}
		Both in the cases {\rm (}\ref{thm:1.1}{\rm )} and {\rm (}\ref{thm:1.2}{\rm )},
		consider the manifold $M$ which is not a homotopy sphere. $M$ admits a special generic map as in Example \ref{ex:1} such that an internal smooth bundle and a boundary linear bundle of it may not be trivial. Let $n_j=1$ and $n_j=2$ in Example \ref{ex:1}, respectively.
		
	\end{enumerate}
	\end{Thm}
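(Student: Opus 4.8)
The plan is to prove the three parts together by reducing everything, via the factorization $f = \bar{f}\circ q_f$ of Proposition \ref{prop:1}, to a study of the quotient manifold $W_f$ together with its internal smooth bundle and boundary linear bundle (Definition \ref{def:2}). Throughout I would write $n=2$ in part (\ref{thm:1.1}) and $n=3$ in part (\ref{thm:1.2}), exploiting that $\dim W_f = n$ is small.

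\emph{Topology of $W_f$.} Since $\bar{f}:W_f \to \mathbb{R}^n$ is an immersion of an $n$-manifold into $\mathbb{R}^n$, its differential identifies $TW_f$ with the pullback of the trivial bundle, so $W_f$ is parallelizable, hence orientable; moreover $\partial W_f \neq \emptyset$, since a closed manifold cannot immerse into $\mathbb{R}^n$ (its image would be open, closed and compact). For $n=2$ this makes $W_f$ a compact connected orientable surface with nonempty boundary, so $W_f$ is homotopy equivalent to a wedge of circles. For $n=3$ I would first note that $\pi_1(W_f)$ is a quotient of $\pi_1(M)$, because $q_f$ restricts to a bundle with connected fibre over the interior while the collar contributes nothing; hence $M$ simply connected forces $W_f$ simply connected. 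A simply-connected compact orientable $3$-manifold has only $2$-sphere boundary components (the half-lives–half-dies consequence of Poincar\'e--Lefschetz duality), so capping the boundary spheres with balls and invoking the Poincar\'e conjecture shows $W_f$ is diffeomorphic to $S^3$ with finitely many open balls removed; in particular $W_f$ is homotopy equivalent to a wedge of $2$-spheres. Thus in both cases $W_f \simeq \bigvee S^{n-1}$.

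\emph{Reconstruction of $M$.} I would rebuild $M$ from $W_f$ and the bundle data. When $W_f$ is a single disk the reconstruction reproduces the model of the canonical projection, $S^m = \partial(D^n\times D^{m-n+1})$, so $M$ is a homotopy sphere and we are in alternative (a). In general I would use a handle/collar decomposition reflecting the homotopy type $\bigvee S^{n-1}$: each $(n-1)$-sphere of $W_f$ is carried by a piece diffeomorphic to $S^{n-1}\times[0,1]$, over which the internal $S^{m-n}$-bundle collapses to points along the two boundary copies of $S^{n-1}$; projecting onto the $S^{n-1}$ factor exhibits the associated connected summand as a smooth bundle over $S^{n-1}$ whose fibre is the fibrewise suspension $S^{m-n+1}$ of the internal fibre $S^{m-n}$. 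Assembling the pieces realizes $M$ as a smooth connected sum of such total spaces, which for $n_j = n-1$ is precisely the family of Example \ref{ex:1}; this gives alternative (b), and, read backwards together with Proposition \ref{prop:2}, both the converse directions and part (\ref{thm:1.3}).

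\emph{Main obstacle.} The homotopy-level identification above is comparatively soft; the genuine difficulty is the control of the \emph{smooth} structure. I must show that the suspended fibre $S^{m-n+1}$ is forced to be a homotopy sphere that is not a $4$-dimensional exotic sphere, and that in the asserted low ranges (the converse of part (\ref{thm:1.2}) is claimed only for $m=4,5,6$) the internal smooth bundle may be replaced by a linear bundle with a standard-sphere fibre. These rest on the differential-topological input of Proposition \ref{prop:1}(\ref{prop:1.2.2}), which guarantees linearity of the internal bundle for $m-n \leq 3$, combined with the classification of homotopy spheres and of the structure groups of sphere bundles in the relevant dimensions. Tracking precisely which connected summands could a priori carry exotic or non-linear data, and excluding the $4$-dimensional exotic spheres, is where the argument becomes genuinely technical.
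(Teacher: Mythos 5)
This theorem is stated in the paper as a citation of Saeki's results \cite{saeki1,saeki2}; the paper itself contains no proof, so your attempt can only be measured against the known argument. Your outline does follow the standard strategy (Stein factorization $f=\bar{f}\circ q_f$, the structure of the parallelizable quotient $W_f$ with nonempty boundary, and reconstruction of $M$ from the internal and boundary bundles), but the part you defer as the ``main obstacle'' is in fact the entire content of the theorem, and one further step is shakier than you present it. The reconstruction is not done summand-by-summand on pieces $S^{n-1}\times[0,1]$: for $n=2$ a positive-genus $W_f$ does not decompose as a disjoint or boundary-connected union of annuli, so ``each circle of the wedge is carried by an annulus'' is not literally available. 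The route that works is to show $M\cong\partial E$ (up to corner smoothing and a possible twisted-sphere correction), where $E$ is the $D^{m-n+1}$-bundle over $W_f$ obtained by gluing the boundary linear bundle to a fibrewise cone on the internal bundle; one then observes that $E$ admits a handle decomposition with one $0$-handle and only $(n-1)$-handles (since $W_f\simeq\bigvee S^{n-1}$), so $E$ is a boundary connected sum of $D^{m-n+2}$-bundles over $S^{n-1}$ --- for $n=3$ this uses that attaching circles of $2$-handles in an $(m+1)$-manifold with $m+1\geq 5$ are unknotted and unlinked --- and $\partial E$ is the asserted connected sum. Without this handle argument your ``assembling the pieces'' claim has no support.

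The second missing idea is the exclusion of $4$-dimensional exotic sphere fibres, which you flag but do not supply. The fibres of the resulting bundles arise as unions of two disks $D^{m-n+1}\cup_{\phi}D^{m-n+1}$ glued along a diffeomorphism $\phi$ of $S^{m-n}$, i.e.\ as twisted spheres; Cerf's theorem that $\pi_0(\mathrm{Diff}^{+}(S^3))$ is trivial shows every twisted $4$-sphere is standard, which is exactly why $4$-dimensional exotic spheres cannot occur. Relatedly, the converse in part (\ref{thm:1.2}) is asserted only for $m=4,5,6$ because only there does Proposition \ref{prop:1} (\ref{prop:1.2.2}) force the internal bundle to be linear and does the classification of $S^{m-2}$-bundles over $S^2$ match the list of manifolds; your sketch does not explain why the range is restricted. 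Finally, your appeal to the Poincar\'e conjecture for $W_f$ in the case $n=3$ is anachronistic relative to \cite{saeki1,saeki2} and unnecessary: the homotopy type $\bigvee S^2$ follows from Poincar\'e--Lefschetz duality for a simply-connected compact $3$-manifold with boundary without identifying $W_f$ up to diffeomorphism.
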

In addition, \cite{nishioka} and \cite{kitazawa5} have solved variants of problems of Theorem \ref{thm:1} (\ref{thm:1.2}) and obtained answers in the cases $(m,n)=(5,4), (6,4)$ where $n$ is the dimension of the Euclidean space of the target, respectively. 

\subsection{Some exposition on elementary algebraic topology and differential topology.}

We omit systematic and rigorous exposition on elementary algebraic topology as we have said in the first section. However, we need exposition for some. For systematic studies, consult \cite{hatcher} again for example.

One of such exposition is on {\it fundamental classes} of connected and compact (oriented) manifolds.

Let $A$ be a commutative ring having a unique identity element $1_A$ which is different from the zero element $0_A$. $1_A$ and $-1_A$ are generators of $A$. $A$ is also seen as a module over $A$ canonically.
For any compact and connected oriented manifold $X$, $H_{\dim X}(X, \partial X; A)$ is isomorphic to $A$ as the module. A generator is given according to the orientation of $X$. Note that for example, orientations are not needed in the case $A:=\mathbb{Z}/2\mathbb{Z}$ or the commutative ring of order $2$.

For a manifold $Y$, consider an embedding $i_X:X \rightarrow Y$ satisfying suitable conditions according to the category where we argue. For example, in the smooth category, the embedding is smooth and in the PL or equivalently, in the piecewise smooth category, this is defined to be piecewise smooth. Furthermore, $X$ is embedded {\it properly}. In other words, the boundary is embedded into the boundary and the interior is embedded into the interior and (in the smooth category) $X$ must be embedded in a so-called {\it generic} way. More precisely, we need "transversality".  
If $a \in H_j(Y, \partial Y; A)$ is the value of the homomorphism ${i_{X}}_{\ast}:H_{\dim X}(X, \partial X; A) \rightarrow H_{\dim X}(Y, \partial Y; A)$ induced canonically from the embedding at the fundamental class $[X] \in H_{\dim X}(X, \partial X; A)$, then $a$ is {\it represented} by the submanifold $i_X(X)$.

We add exposition on "transversality" related to our smooth embedding $i_X:X \rightarrow Y$. We generally consider a smooth embedding satisfying a nice condition on the dimensions of subspaces of tangent vector spaces and the images of differentials. More rigorously, the dimension of the intersection of the image of the differential $d {i_X}_p$ of the embedding $i_X:X \rightarrow Y$ at each point $p \in \partial X$ and the tangent space at $i_X(p) \in \partial Y$ must be calculated as $\dim X+\dim \partial Y-\dim Y=\dim X-1$. This is also fundamental and important in the singularity theory of differentiable maps and applications to differential topology of manifolds. Consult \cite{golubitskyguillemin} again for systematic studies for example.

For compact, connected and oriented manifolds, so-called {\it Poincar\'e duals} to elements of the (co)homology groups and Poincar\'e duality (theorem) are important.

We explain about duals in modules and {\it cohomology duals}. Let $B_A$ be a module over $A$ having a unique maximal free submodule and let the rank of this submodule be finite and $l$. Suppose that we have a basis ${\mathcal{B}}_A:=\{e_j\}_{j=1}^l$ of it consisting of elements which are not divisible by elements which are not units of $A$. We can define a homomorphism ${e_j}^{\ast,{\mathcal{B}}_A}$ from $B_A$ into $A$ uniquely by the relation: ${e_{j_1}}^{\ast,{\mathcal{B}}_A}(e_{j_2}):=1_A$ in the case $j_1=j_2$ ${e_{j_1}}^{\ast,{\mathcal{B}}_A}(e_{j_2}):=0_A$ in the case $j_1 \neq j_2$. This is the {\it dual} to $e_j$ {\it respecting the basis} ${\mathcal{B}}_A:=\{e_j\}_{j=1}^l$. If $B_A$ is a homology group of some topological space, then, the element is the {\it cohomology dual} respecting the basis. 

\section{Our new class of smooth maps like special generic maps.}

\begin{Def}
	\label{def:3}
	Let $m \geq n \geq 1$ be integers.
	A {\it special-generic-like} map or an {\it SGL} map is a smooth map $f:M \rightarrow N$ on an $m$-dimensional closed and connected manifold $M$ into an $n$-dimensional smooth manifold $N$ with no boundary enjoying the following properties.
	\begin{enumerate}
	 \item \label{def:3.1} The image $f(M)$ is the image of some smooth immersion ${\bar{f}}_N:\bar{N} \rightarrow N$ of some $n$-dimensional compact and connected manifold $\bar{N}$.
	 
	 	\item \label{def:3.2}
 As in Proposition \ref{prop:1}, we have some smooth surjection $q_f:M \rightarrow W_f$ with the manifold $\bar{N}$ being identified in a suitable way with $W_f$ as a smooth manifold and have the relation $f={\bar{f}}_N \circ q_f$. Furthermore, we can choose $q_f$ as a map whose restriction to the singular set $S(f)$ gives a diffeomorphism onto the boundary $\partial W_f$.
	 	\item \label{def:3.3}
 We have some small collar neighborhood $N(\partial W_f)$ of the boundary $\partial W_f\subset W_f$ and the composition of the restriction of $q_f$ to the preimage with the canonical projection to the boundary is the projection of some smooth bundle over $\partial W_f$. This is also as in Proposition \ref{prop:1}.
\item \label{def:3.4}
 On the collar neighborhood $N(\partial W_f)$ and the preimage ${q_f}^{-1}(N(\partial W_f))$, it is represented as the product map of the following two smooth maps for suitable local coordinates around each point $p$ of $\partial W_f \subset N(\partial W_f)$. 
\begin{enumerate}
\item \label{def:3.4.1}
 A smooth function ${\tilde{f}}_{p,\partial W_f}$ on an ($m-n+1$)-dimensional smooth compact and connected manifold $E_p$.  
\begin{enumerate}
\item
\label{def:3.4.1.1}
 The image of the function ${\tilde{f}}_{p,\partial W_f}$ can be denoted by $[{\rm min}_p,{\rm max}_p]$. We have ${{\tilde{f}}_{p,\partial W_f}}^{-1}({\rm min}_p)=\partial E_p$ or ${{\tilde{f}}_{p,\partial W_f}}^{-1}({\rm max}_p)=\partial E_p$.
 The singular set of the function ${\tilde{f}}_{\rm p,\partial W_f}$ is in the interior ${\rm Int}\ E_p$.
 \item \label{def:3.4.1.2} For values of the function, the preimage of a value contains some singular points if and only if it is the maximum or the minimum, which is defined uniquely. If ${{\tilde{f}}_{p,\partial W_f}}^{-1}({\rm min}_p)=\partial E_p$, then ${{\tilde{f}}_{p,\partial W_f}}^{-1}({\rm max}_p)$ is a subpolyhedron of dimension at most $m-n$ where $E_p$, the outer manifold $M$ and related smooth manifolds are regarded as the canonically defined PL manifolds. If ${{\tilde{f}}_{p,\partial W_f}}^{-1}({\rm max}_p)=\partial E_p$, then ${{\tilde{f}}_{p,\partial W_f}}^{-1}({\rm min}_p)$ is a subpolyhedron of dimension at most $m-n$ where $E_p$, the outer manifold $M$ and related smooth manifolds are regarded as the canonically defined PL manifolds.
 \end{enumerate}
 \item \label{def:3.4.2} The identity map on a small open neighborhood of $p$ where the neighborhood is considered in $\partial W_f$
\end{enumerate}
 
\item
\label{def:3.5}
 The restriction of $q_f$ to the preimage ${q_f}^{-1}(W_f-{\rm Int}\ N(\partial W_f))$ is the projection of some smooth bundle whose fiber is some smooth closed manifold $F$. The boundary of the manifold $E_p$ of the domain of the function used for the product maps before is diffeomorphic to $F$.

	 \end{enumerate}
	
\end{Def}
We discuss Definition \ref{def:3} further by assuming $m>n$ and the manifold $F$ and the preimage ${q_f}^{-1}(p^{\prime})$ of any point in $p^{\prime} \in \partial W_f$ to be connected. Fix the manifold $F$.

We fix an arbitrary point $q$ in the interior of $W_f$. 
In Definition \ref{def:3}, for each connected component $C$ of the boundary $\partial W_f$, we can define a smooth embedding ${\alpha}:[0,1] \rightarrow W_f$ enjoying the following properties.
\begin{itemize}
	\item ${\alpha}(0)=q$
	\item ${\alpha}([0,1)) \subset {\rm Int}\ W_f$
	\item ${\alpha}(1) \in \partial W_f$
	\item The intersection of the image of the differential of $\alpha$ at $1$ and the tangent vector space of $\partial W_f$ at $p:=\alpha(1)$ is of dimension $\dim \partial W_f-1$. It is a condition on "transversality".
\end{itemize}
The preimage ${q_f}^{-1}(\alpha([0,1]))$ is regarded as the manifold $E_p$ of the domain of a smooth function ${\tilde{f}}_{p,\partial W_f}$. 
More rigorously, it is regarded as a function {\it $C^{\infty}$ equivalent} to the function ${\tilde{f}}_{p,\partial W_f}$ and see \cite{golubitskyguillemin} for such a notion.
We have the inclusion of $F$ into $E_p$. Let $A$ be a commutative ring (having the unique identity element which is not the zero element). 
 We can define the homomorphism from $H_j(F;A)$ into $H_j(E_p;A)$ and the homomorphism ${\pi}_j(F)$ into ${\pi}_j(E_p)$ induced by the inclusion uniquely.
\begin{Def}
	\label{def:4}
We call a homomorphism between the groups above a {\it reference homomorphism along the curve} $\alpha$.
 \end{Def}

In considerable situations, such homomorphisms are defined uniquely (modulo suitable equivalence relation). However, we do not consider such arguments precisely. 

\begin{Ex}
\label{ex:2}
	
	We present examples by abusing terminologies and notation in Definitions \ref{def:3} and \ref{def:4}. 
	\begin{enumerate}
		\item \label{ex:2.1}
 Special generic maps (whose singular sets are not empty on closed and connected manifolds) are seen as specific SGL maps. We explain about this. We explain about a {\it height function of a unit sphere}. This gives a specific case of canonical projections of unit spheres and a Morse function. A {\it height function of a unit disk} is defined as the restriction of the height function of the unit sphere to the preimage of the half-line, the intersection of the line $\mathbb{R}$ and $\{t \geq 0 \mid t \in \mathbb{R}\}$.
$F$ is taken as the unit sphere $S^{m-n}$, $E_p$ is taken as the unit disk $D^{m-n+1}$, and the function ${\tilde{f}}_{p,\partial W_f}$ is a height function here. Reference homomorphisms along these curves are always the zero homomorphisms except the case of degree $0$ in Definition \ref{def:4}. 
		\item \label{ex:2.2}
Simply generalized special generic maps (whose singular sets are not empty on closed and connected manifolds) give cases where $F$ is the product of standard spheres in Definitions \ref{def:3} and \ref{def:4}. We explain about local functions used for the product maps on the boundary. $E_p$ is defined as
the product of a copy of some unit disk $D^{k_p+1}$ and finitely many standard spheres where $k_p \geq 1$ is an integer. Here the family of the spheres here are obtained by removing exactly one sphere in the family of the spheres for the product $F$. The function ${\tilde{f}}_{p,\partial W_f}$ is represented as the projection of a trivial bundle over the copy of $D^{k_p+1}$ with a height function.
The fiber is diffeomorphic to the product of standard spheres where the family of the spheres here are obtained by removing the exactly one sphere in the family of the spheres for the product $F$ as before. 
The function is a Morse-Bott function.
Let $A$ be a principal ideal domain having the identity element different from the zero element.
In simplest cases, our reference homomorphisms along our curves are homomorphisms enjoying the following properties where the coefficient ring is $A$.
\begin{itemize}
\item The kernels are free and of rank $1$. 
\item The kernel is generated by an element represented by a subspace of the product of standard spheres where the product of the spheres and the subspace are presented in the following.
\begin{itemize}
\item The product of the spheres is canonically identified with $F$, diffeomorphic to and identified with the product ${\prod}_{j=1}^{l+1} S^{k_j}$, and the dimension is $m-n$ where $l \geq 0$ is an integer: if $l=0$, then the Morse-Bott function is a height function on the copy of $D^{k_p+1}$ with $k_p=k_1$. This does not depend on which point we choose in $\partial W_f$. 
\item The subspace is represented as ${\prod}_{j=1}^{l} S_p(S^{k_j}) \subset {\prod}_{j=1}^{l} S^{k_j}$ satisfying the following conditions: $S_p(S^{k_j})=S^{k_j}$ for exactly one integer $j=j_p$ and suitable one-point set $\{{\ast}_{j,p}\}$ for remaining integer $j \neq j_p$. This does not depend on which point we choose in the connected component $C \ni p$ of $\partial W_f$. 

\end{itemize} 
\end{itemize} 
Note that in \cite{kitazawa9}, simply generalized special generic maps in simplest cases here and the homology groups and the cohomology rings of the manifolds are studied.

Of course, special generic maps are also simply generalized special generic maps.

			\end{enumerate}
\end{Ex}
We give an additional remark.
\begin{Rem}
\label{rem:1}
If the following are given, then we have a natural SGL map with some simple structure easily by using the canonically obtained trivial bundles over the complementary set of a suitable small collar neighborhood of $\partial \bar{N} \subset \bar{N}$ and the product maps around the collar neighborhood. Proposition \ref{prop:2} gives one of simplest specific cases.
\begin{itemize}
\item Integers $m \geq n \geq 1$.
\item A smooth immersion ${\bar{f}}_N:\bar{N} \rightarrow N$ of some $n$-dimensional compact and connected manifold $\bar{N}$ into an $n$-dimensional smooth manifold $N$ with no boundary.
\item A suitable smooth function ${\tilde{f}}_{p,\partial W_f}$ on an ($m-n+1$)-dimensional smooth compact and connected manifold $E_p$.
\end{itemize}
We first construct these bundles, projections and these product maps and gluing them suitably via isomorphisms of the trivial smooth bundles defined canonically on the boundaries. Simplest examples are given by considering the product maps of the identity maps between the base spaces and the fibers. Note here that for the identity maps here, identifications are given suitably and naturally beforehand.

This is important in construction and used in the proof of Main Theorem \ref{mthm:2}.
Related to Example \ref{ex:2} (\ref{ex:2.2}), one of our main work of \cite{kitazawa9} is regarded as a work constructing simply generalized special generic maps with prescribed reference homomorphisms along given curves and studying the homology groups and the cohomology rings of the obtained manifolds of the domains. Of course reference homomorphisms along curves are not introduced there. Note also that this generalizes our previous work on the homology groups and the cohomology rings of the manifolds in the cases of special generic maps \cite{kitazawa11}. More precisely, \cite{kitazawa11} studies the case where the isomorphisms of the trivial smooth bundles defined canonically on the boundaries of the canonically obtained trivial bundles are regarded as the product maps of the identity maps between the base spaces and the fibers.
\end{Rem}

\section{On Main Theorems.}

We present Main Theorems \ref{mthm:1} and \ref{mthm:2} again in terms of Definitions \ref{def:3} and \ref{def:4}.

\begin{Thm}[Main Theorem \ref{mthm:1}]
	\label{thm:2}
	Let $m>n$ be an arbitrary positive integer satisfying $m-n>0$. Let $f:M \rightarrow N$ be an SGL map from an $m$-dimensional closed and connected manifold into an $n$-dimensional connected manifold $N$ with no boundary which is represented as the composition of a smooth surjection $q_f:M \rightarrow W_f$ onto an $n$-dimensional compact and $k$-connected connected manifold $W_f$ with a smooth immersion $\bar{f}:W_f \rightarrow N$.

We also assume the following conditions.
\begin{itemize}
	\item $k<m-n$.
	\item The preimage ${q_f}^{-1}(p)$ of a point $p \in {\rm Int}\ W_f$ is diffeomorphic to a closed and {\rm (}$k-1${\rm )}-connected manifold $F$. 
	\item For any point $p \in \partial W_f$, ${q_f}^{-1}(p)$ is connected and at most $k^{\prime}$-dimensional with $k^{\prime} \leq m-n$.
	\item $k+k^{\prime}+n-1<m$.
    \item There exist finitely many smooth curves in ${\rm Int}\ W_f$ and that the reference homomorphisms along these curves between the $k$-th homotopy groups are defined. Furthermore, the union of their kernels, uniquely defined, and ${\pi}_k(F)$ coincide. 
\end{itemize}
Then $M$ is $k$-connected.
\end{Thm}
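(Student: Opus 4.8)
The plan is to compare $M$ with the part of it lying over the interior of $W_f$, where $q_f$ is a genuine fibre bundle, and then to use the degenerate fibres over $\partial W_f$ to kill whatever survives in ${\pi}_k$. Set $\Sigma := {q_f}^{-1}(\partial W_f)$. By Definition \ref{def:3}, over each point of $\partial W_f$ the preimage is connected and at most $k^{\prime}$-dimensional, so $\Sigma$ is a subpolyhedron of $M$ with $\dim \Sigma \le (n-1)+k^{\prime}$; by the hypothesis $k+k^{\prime}+n-1<m$ its codimension in $M$ is at least $m-(n-1+k^{\prime}) > k$, i.e.\ at least $k+1$. On the other hand, combining the bundle with fibre $F$ over $W_f-{\rm Int}\ N(\partial W_f)$ (Definition \ref{def:3} (\ref{def:3.5})) with the product description over the collar (Definition \ref{def:3} (\ref{def:3.4})), whose non-extremal level sets are all diffeomorphic to $\partial E_p \cong F$, I would identify the restriction $M \setminus \Sigma = {q_f}^{-1}({\rm Int}\ W_f) \to {\rm Int}\ W_f$ as a smooth fibre bundle with fibre $F$.

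First I would read off the connectivity of $M \setminus \Sigma$ from the homotopy exact sequence of this bundle. Since ${\rm Int}\ W_f$ is homotopy equivalent to $W_f$ and hence $k$-connected, and $F$ is $(k-1)$-connected, the sequence ${\pi}_{i+1}({\rm Int}\ W_f) \to {\pi}_i(F) \to {\pi}_i(M \setminus \Sigma) \to {\pi}_i({\rm Int}\ W_f)$ gives ${\pi}_i(M \setminus \Sigma)=0$ for $i \le k-1$, while the fragment ${\pi}_k(F) \to {\pi}_k(M \setminus \Sigma) \to {\pi}_k({\rm Int}\ W_f)=0$ shows the fibre inclusion induces a surjection ${\pi}_k(F) \twoheadrightarrow {\pi}_k(M \setminus \Sigma)$. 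Since $W_f$ is $k$-connected it is simply connected, so there is no monodromy and the reference fibre $F \cong {q_f}^{-1}(q)$ over the chosen base point $q$ is canonical up to homotopy.

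Next I would promote this to $M$ itself by general position against $\Sigma$. As $\Sigma$ is a subpolyhedron of codimension at least $k+1$ and every manifold here carries its canonical PL structure, PL general position lets me homotope any map $S^i \to M$ with $i \le k$ off $\Sigma$, and any homotopy $S^i \times [0,1] \to M$ with $i \le k-1$ off $\Sigma$; hence the inclusion $M \setminus \Sigma \hookrightarrow M$ is $k$-connected, inducing isomorphisms on ${\pi}_i$ for $i \le k-1$ and a surjection on ${\pi}_k$. Consequently ${\pi}_i(M)=0$ for $i \le k-1$, and composing the two surjections produces a surjection ${\pi}_k(F) \twoheadrightarrow {\pi}_k(M)$ realised by the inclusion of the reference fibre $F \hookrightarrow M$.

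Finally I would invoke the reference-homomorphism hypothesis to see that this last surjection is the zero map. Writing ${\alpha}_j$ for the finitely many prescribed curves and $E_j := {q_f}^{-1}({\alpha}_j([0,1])) \subset M$, each $E_j$ satisfies $F \cong \partial E_j$, so the reference homomorphism ${\pi}_k(F) \to {\pi}_k(E_j)$ of Definition \ref{def:4} composed with $E_j \hookrightarrow M$ equals the fibre-inclusion map ${\pi}_k(F) \to {\pi}_k(M)$; thus every element of ${\rm Ker}({\pi}_k(F) \to {\pi}_k(E_j))$ is null-homotopic in $M$. Since ${\rm Ker}({\pi}_k(F) \to {\pi}_k(M))$ is a normal subgroup containing each such kernel, and by hypothesis these kernels generate ${\pi}_k(F)$, the map ${\pi}_k(F) \to {\pi}_k(M)$ vanishes; being also surjective it forces ${\pi}_k(M)=0$, which together with ${\pi}_i(M)=0$ for $i \le k-1$ proves $M$ is $k$-connected. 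I expect the main obstacle to be the third step: one must verify carefully that $\Sigma$ really is a subpolyhedron of dimension at most $(n-1)+k^{\prime}$ and that the PL general-position argument applies as stated---this is exactly where the dimension hypothesis $k+k^{\prime}+n-1<m$ is consumed---because near $\partial W_f$ the map $q_f$ is a degenerating (stratified) projection rather than an honest bundle, and the homotopy equivalence $M \setminus \Sigma \simeq {q_f}^{-1}(W_f-{\rm Int}\ N(\partial W_f))$ underlying the fibre-bundle claim must be justified from Definition \ref{def:3} (\ref{def:3.4}) and (\ref{def:3.5}).
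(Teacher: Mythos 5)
Your argument follows the paper's proof step for step: push spheres of dimension at most $k$ off the polyhedron ${q_f}^{-1}(\partial W_f)$ by general position using the hypothesis $k+k^{\prime}+n-1<m$, reduce to classes coming from $\pi_k(F)$ via the $F$-bundle over the $k$-connected interior of $W_f$, and kill those classes through the kernels of the reference homomorphisms realised inside the sets $E_j \subset M$. The paper compresses the exact-sequence and transversality steps into two sentences, so your write-up is simply a more detailed rendering of the same route, and it is correct.
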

\begin{proof}
	We can take a smooth submanifold ${S^k}_0$ diffeomorphic to $S^k$ in $M$.
	${q_f}^{-1}(\partial W_f)$ is a polyhedron of dimension at most $n-1+k^{\prime}$.  
	 $k+(n-1+k^{\prime})<m$ implies that we can smoothly isotope the submanifold ${S^k}_0$ apart from ${q_f}^{-1}(\partial W_f)$.
	 
	 $S^k$ is, thus, homotopically, regarded as in the total space of the trivial smooth bundle over a smoothly embedded copy of the unit disk $D^{n}$ in ${\rm Int}\ W_f$ defined by considering the preimage by $q_f$. Its fiber is ($k-1$)-connected and diffeomorphic to $F$. By the assumption on our reference homomorphisms, $S^k$ is shown to be null-homotopic. This completes the proof.
	
\end{proof}

\begin{Ex}
\label{ex:3}
\begin{enumerate}
\item 
\label{ex:3.1}
According to \cite{saeki1}, for a special generic map into the Euclidean space or a connected manifold which is not compact and has no boundary, $q_f$ induces isomorphisms between the homology groups and the homotopy groups whose degrees are at most $m-n$. Theorem \ref{thm:2} generalizes this.
\item 
\label{ex:3.2}
 Consider a map represented as the composition of the projection of a trivial smooth bundle $S^{k_1} \times S^{k_2}$ over $S^{k_1}$ whose fiber is diffeomorphic to $S^{k_2}$ with a canonical projection into ${\mathbb{R}}^{k_1-1}$ where the conditions $k_1 \geq 2$ and $k_2 \geq 2$ are assumed. This is for Theorem \ref{thm:2} where $m=k_1+k_2$, $n=k_1-1$, $N:={\mathbb{R}}^n$, $k=1$, $W_f:=D^{n}$, $\bar{f}:W_f \rightarrow N$ is the canonical smooth embedding, and $F=S^1 \times S^{k_2}$.
\end{enumerate}
\end{Ex}
\begin{Def}
	An SGL map in Theorem \ref{thm:2} is said to be a {\it $k$-connected-SGL} map.
\end{Def}

\begin{Thm}[Main Theorem \ref{mthm:2}]
\label{thm:3}
	Let $n$ be an arbitrary positive integer. Suppose that an $n$-dimensional connected manifold $N$ with no boundary and a smooth immersion $\bar{f}:\bar{N} \rightarrow N$ of an n-dimensional compact and simply-connected manifold $\bar{N}$ having at least two boundary components are given.
Let $m=n+2$. Let $F$ be a closed, connected and orientable surface which is not a sphere. Then we have a 1-connected-SGL map $f:M \rightarrow N$ on some $m$-dimensional closed and simply-connected manifold $M$ into $N$ satisfying the conditions of Theorem \ref{thm:2} with the notation being abused and $\bar{N}$ and $W_f$ being identified suitably as smooth manifolds.
\end{Thm}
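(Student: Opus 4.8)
The plan is to build $f$ by hand following the recipe of Remark~\ref{rem:1} and then to quote Theorem~\ref{thm:2} to conclude that $M$ is simply-connected. Let $g\ge 1$ be the genus of $F$ (it is not a sphere, so $g\ge 1$); then $\dim F=2=m-n$ and $F$ is connected, hence $(k-1)$-connected for $k=1$. I would identify $W_f$ with $\bar N$, which is compact, $n$-dimensional and simply-connected, so $1$-connected $=k$-connected. For the fibre data over the boundary I take $E_p$ to be a genus-$g$ handlebody $H_g$, so that $\dim E_p=3=m-n+1$ and $\partial E_p\cong F$ as required in~(\ref{def:3.5}). Over $W_f-{\rm Int}\ N(\partial W_f)$ I use the trivial $F$-bundle, and over the collar of each boundary component I use the product maps supplied by Remark~\ref{rem:1}; this yields a closed connected $(n+2)$-manifold $M$ together with an SGL map $f=\bar f\circ q_f$, with the structural conditions~(\ref{def:3.2})--(\ref{def:3.5}) holding by construction.

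For the local function I would take $\tilde f_{p,\partial W_f}\colon H_g\to[\min_p,\max_p]$ to collapse the handlebody onto a spine: arrange $\tilde f^{-1}(\max_p)=\partial H_g\cong F$ and $\tilde f^{-1}(\min_p)$ to be a wedge of $g$ circles (a spine of $H_g$), with all singular points concentrated on this minimal level and therefore in ${\rm Int}\ H_g$, as in~(\ref{def:3.4.1.1}). Then the non-boundary extreme fibre is a $1$-dimensional subpolyhedron, of dimension at most $m-n=2$ as demanded in~(\ref{def:3.4.1.2}), and the critical fibre ${q_f}^{-1}(p)$ over each $p\in\partial W_f$ is exactly this connected $1$-complex, so $k'=1$. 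For $g=1$ this is literally the radial-distance function on the solid torus $D^2\times S^1$ with core circle as critical locus; for $g\ge 2$ one takes the analogous function on $H_g$ viewed as the mapping cylinder of the collapse of $F$ onto its spine.

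The decisive point is the reference-homomorphism hypothesis of Theorem~\ref{thm:2} (Definition~\ref{def:4}), and this is exactly where the assumption that $\bar N$ has at least two boundary components enters. The inclusion $F=\partial H_g\hookrightarrow H_g$ maps $\pi_1(F)=\langle a_1,b_1,\dots,a_g,b_g\mid\prod_i[a_i,b_i]\rangle$ onto the free group $\pi_1(H_g)$, with kernel the normal closure of the meridian system. Filling one boundary component $C_1$ by $H_g$ in the standard way makes the kernel of the associated reference homomorphism the normal closure of $\{b_1,\dots,b_g\}$; filling a second component $C_2$ by $H_g$ glued through a mapping class of $F$ that interchanges the $a$- and $b$-curves makes the corresponding kernel the normal closure of $\{a_1,\dots,a_g\}$. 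Taking curves $\alpha_1,\alpha_2$ from a fixed interior point to $C_1$ and $C_2$, the union of the two kernels contains every $a_i$ and $b_i$, hence generates $\pi_1(F)$; any further boundary components may be filled arbitrarily.

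Finally I would verify the numerical hypotheses of Theorem~\ref{thm:2}: $m>n$ and $k=1<2=m-n$; $W_f=\bar N$ is $1$-connected while $F$ is $(k-1)$-connected; each ${q_f}^{-1}(p)$ over $\partial W_f$ is connected of dimension $k'=1\le m-n$; and $k+k'+n-1=n+1<n+2=m$. All hypotheses then hold, and Theorem~\ref{thm:2} gives that $M$ is $1$-connected, i.e.\ simply-connected. I expect the main obstacle to lie not in any single bundle but in the compatibility of the two distinct handlebody fillings with the fixed $F$-bundle over the interior: exhibiting the gluing mapping class of $F$ explicitly and checking that the resulting reference homomorphisms have precisely the normal-closure kernels claimed, so that their union generates $\pi_1(F)$.
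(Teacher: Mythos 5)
Your proposal is correct and follows essentially the same route as the paper: take $E_p$ to be a genus-$g$ handlebody with $\partial E_p\cong F$, assemble the map as in Remark~\ref{rem:1}, and over two distinct boundary components of $\bar{N}$ glue the handlebody piece through diffeomorphisms of $F$ interchanging two dual systems of $g$ disjoint circles, so that the kernels of the two reference homomorphisms are the normal closures of the $b_i$ and of the $a_i$ respectively and their union generates ${\pi}_1(F)$; Theorem~\ref{thm:2} then yields simple connectivity. The one step you defer --- producing a smooth function ${\tilde{f}}_{p,\partial W_f}$ on $H_g$ for $g\geq 2$ meeting conditions (\ref{def:3.4.1.1})--(\ref{def:3.4.1.2}) (your ``spine-collapsing'' function; note the mapping cylinder of $F\rightarrow {\vee}_g S^1$ is not itself a smooth model) --- is exactly what most of the paper's proof is devoted to: it realizes $E_p$ as ${S^2}_{(l)}\times [-1,1]$ (a boundary connected sum of $l-1$ copies of $D^2\times S^1$, cf.\ Remark~\ref{rem:2}), obtains the function by deforming a Morse function on the planar surface ${S^2}_{(l)}$ and composing with a height function, and in doing so exhibits explicitly the circle systems $S_j$, ${S_j}^{\prime}$ and the orientation-preserving diffeomorphism exchanging them that you invoke as the ``gluing mapping class.''
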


\begin{proof}
	We review our main ingredient of \cite{kitazawa0}.
	
	We consider a Morse function ${\tilde{f}}_{S^2,l}$ on a surface ${S^2}_{(l)}$ obtained by removing the interiors of $l>1$ copies of the unit disk $D^2$ smoothly embedded in a $2$-dimensional standard sphere enjoying the following properties. This is a fundamental exercise on Morse functions. See \cite{milnor1, milnor2} for example.
	
	\begin{itemize}
		\item The image is denoted by $[-a,a] \subset \mathbb{R}$ for a real number $a$.   
		\item ${{\tilde{f}}_{S^2,l}}^{-1}(a)$ and one of connected components of the boundary coincide.
		
	\item ${{\tilde{f}}_{S^2,l}}^{-1}(-a)$ and $l-1$ of connected components of the boundary coincide.
	\item Let $\{s_j\}_{j=1}^{l-2}$ denote the set of all singular points of ${\tilde{f}}_{S^2,l}$. For these points, we have ${\tilde{f}}_{S^2,l}(s_j)=-a+2a \frac{1}{l-1}j$.
	\end{itemize} 

We can consider a smooth deformation ${\tilde{f}}_{S^2,l,[-1,1]}:{S^2}_{(l)} \times [-1,1] \rightarrow [-a,a] \times [-1,1]$ of the function enjoying the following properties by fundamental arguments on the theory of Morse functions, smooth functions, and differential topology of manifolds, for example.
	\begin{itemize}
\item Around each singular point of the deformation, it is represented as the product map of a Morse function and the identity map on a line for suitable local coordinates. It is a singular point of a so-called {\it fold} map and the class of fold maps generalizes the class of Morse functions and that of special generic maps. See \cite{golubitskyguillemin} for {\it fold} singularity.
\item The restriction to the singular set of the deformation is a smooth embedding such that around the boundaries our condition on the transversality is satisfied.  
	\item The image is $[-a,a] \times [-1,1]$.   
	\item ${\tilde{f}}_{S^2,l,[-1,1]}((p,-1))={\tilde{f}}_{S^2,l}(p)$.
	\item Outside the union of small neighborhoods of singular points $s_j$, the values of  ${\tilde{f}}_{S^2,l,[-1,1]}$ are constant in the interval $[-1,1]$ in ${S^2}_{(l)} \times [-1,1]$.
	\item In the interval $[-1,1]$ in ${S^2}_{(l)} \times [-1,1]$, the obtained functions are Morse functions whose singular sets are invariant. Furthermore, at $t \in [-1,1]$, $s_j$ is mapped to $((-a+2a \frac{1}{l-1}j)(-t),t) \in [-a,a] \times [-1,1]$. 
\end{itemize} 
 
We restrict ${\tilde{f}}_{S^2,l,[-1,1]}:{S^2}_{(l)} \times [-1,1] \rightarrow [-a,a] \times [-1,1]$ to the preimage of a disk $\{x \in {\mathbb{R}}^2 \mid ||x|| \leq \frac{1}{2}\}$ and compose this with the height function mapping $x$ to $\pm |||x||^2+c$ where $c$ is some real number. We adopt a manifold diffeomorphic to the $3$-dimensional manifold of the domain as $E_p$ and a function ${\tilde{f}}_{p,\partial W_f}$ as the obtained one: more rigorously a smooth function being $C^{\infty}$ equivalent to the obtained function. We construct our desired map as presented in Remark \ref{rem:1} with Proposition \ref{prop:2}. We investigate $\partial E_p$ and its fundamental group.

It is essentially same to investigate ${S^2}_{(l)} \times [-1,1]$ and its boundary. They are diffeomorphic to the original $3$-dimensional manifold $E_p$ and the surface $\partial E_p$ after the corners are eliminated. 
$\partial E_p$ is, by the original argument or fundamental arguments from Morse functions, a closed, connected and orientable surface of genus $l-1$. 
More precisely, we can consider a smooth isotopy from ${S^2}_{(l)} \times [-1,1]$ to $E_p$ by deforming the map in a natural way. We can argue and have smooth maps, $1$-dimensional manifolds diffeomorphic to circles or closed intervals, and other geometric objects as follows.

\begin{itemize}
\item In ${{S^2}_{(l)} \times \{-1\}}$, we have exactly $l-1$ smoothly embedded curves, each of which is denoted by $c_j:[-1,1] \rightarrow {S^2}_{(l)} \times \{-1\}$ for an integer $1 \leq j \leq l-1$. Furthermore, we can have the family enjoying the following properties. Here the connected component ${{\tilde{f}}_{S^2,l,[-1,1]}}^{-1}((a,-1))$ is denoted by $C_0$ and a circle of course. ${{\tilde{f}}_{S^2,l,[-1,1]}}^{-1}((-a,-1))$ consists of exactly $l-1$ disjoint circles, each of which is denoted by $C_j$ for an integer $1 \leq j \leq l-1$. The disjoint union ${\sqcup}_{j=1}^{l-1} C_{j-1}$ and the boundary of the surface ${{S^2}_{(l)} \times \{-1\}}$ coincide of course.
\begin{itemize}
\item $c_j((-1,1)) \subset {\rm Int}\ {{S^2}_{(l)} \times \{-1\}}$.
\item $c_j(-1) \in C_{j-1}$ and $c_j(1) \in C_{j}$.
\item At boundaries, the condition on the transversality is satisfied.
\item The images of distinct curves are mutually disjoint.
\item The images are sufficiently far from the singular set of the function ${\tilde{f}}_{S^2,l,[-1,1]} {\mid}_{{S^2}_{(l)} \times \{-1\}}$.
\end{itemize}
\item We consider the product of the image of each curve before and $[-1,1]$. The boundary is regarded as a circle smoothly embedded in the boundary $\partial\ ({S^2}_{(l)} \times [-1,1])$ and null-homotopic in ${S^2}_{(l)} \times [-1,1]$.
\item The $l-1$ circles in the boundary $\partial\ ({S^2}_{(l)} \times [-1,1])$ are mutually disjoint and (by choosing the curves $c_j$ suitably first) we can obtain a situation such that after cutting $\partial\ ({S^2}_{(l)} \times [-1,1])$ along the circles, we have a compact, connected and orientable surface whose Euler number is $4-2l$ and which has exactly $2(l-1)$ boundary connected components. In terms of \cite{marzantowiczmichalak}, the obtained $l-1$ circles form an {\it independent and regular system of hypersurfaces }({\it with no boundary}) in the closed, connected and orientable surface $\partial\ ({S^2}_{(l)} \times [-1,1])$ of genus $l-1$. Let $\{S_j\}_{j=1}^{l-1}$ denote the family of the smoothly embedded circles.  
\item (By considering suitable situations), we can have another family $\{{S_j}^{\prime}\}_{j=1}^{l-1}$ of disjointly and smoothly embedded circles in $\partial\ ({S^2}_{(l)} \times [-1,1])$ enjoying the following properties due to fundamental topological theory of surfaces.
\begin{itemize}
\item $S_{j_1}$ and ${S_{j_2}}^{\prime}$ do not intersect if $j_1 \neq j_2$ and intersect at some one-point set if $j_1=j_2$.
\item In the one-point set, these distinct circles intersect satisfying a condition on the transversality: the sum of the images of the differentials there is $2$. They give so-called {\it normal-crossings}.
\item There exists a diffeomorphism which preserves the orientation and maps the disjoint union ${\sqcup}_j S_j$ onto ${\sqcup}_j {S_j}^{\prime}$ and the disjoint union ${\sqcup}_j {S_j}^{\prime}$ onto ${\sqcup}_j S_j$.
\item We can choose an arbitrary point in the surface outside the union of these $l-1+l-1=2(l-1)$ circles and connect the point and some point in each of the $2(l-1)$ circles by a smooth curve. For each circle, we can have an element of ${\pi}_1(\partial\ ({S^2}_{(l)} \times [-1,1]))$ and the set of the resulting $2l-2$ elements generates the fundamental group of the surface.   
\end{itemize}
\end{itemize}
FIGURE \ref{fig:1} shows a case for $l=4$. Circles ${S_j}^{\prime}$ are taken in ${{S^2}_{(l)} \times \{-1\}}$ and we can do so in general. They are also chosen as connected components of the preimages of points containing no singular points for the function on ${{S^2}_{(l)} \times \{-1\}}$ and we can do so in general.
\begin{figure}
	
	\includegraphics[height=25mm, width=40mm]{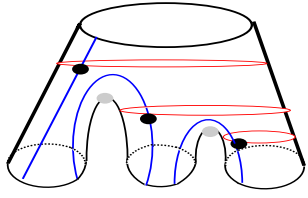}

		\caption{The images of the curves $c_j$ and circles ${S_{j}}^{\prime}$ in the case $l=4$. The images of $c_j$ are depicted in blue and ${S_{j}}^{\prime}$ are in red. Black dots are for the intersection of the curves and circles. Grey dots are for singular points of the function ${\tilde{f}}_{S^2,l,[-1,1]} {\mid}_{{S^2}_{(l)} \times \{-1\}}$.}
		\label{fig:1}

\end{figure}

	Related to these arguments, \cite{marzantowiczmichalak} applies methods closely related to us. This preprint is independent of our study. However methods presented there must be important for us. There, to study homomorphisms from the fundamental group of a compact and connected manifold into free groups, closed intervals or circles disjointly embedded into surfaces, or more generally, such systems of hypersurfaces are studied, for example.  

We go back to our proof. 
Remember that ${S^2}_{(l)} \times [-1,1]$ and $E_p$ are essentially same in our argument. We apply methods in Remark \ref{rem:1}.
By the obtained properties on the surface, its fundamental group and nice systems of circles, for distinct two connected components of the boundary $\partial \bar{N}$, we can choose suitable distinct diffeomorphisms on $\partial \bar{N}$ to have a situation for Theorem \ref{mthm:2}. More precisely, we use the product map of the identity map on the base space and the diffeomorphism on the fiber for each connected component of the boundary $\partial \bar{N}$ to obtain our desired map.

This completes the proof.

\end{proof}
\begin{Ex}
\label{ex:4}
In the case $F:=S^1 \times S^1$ in Theorem \ref{thm:3}, our construction gives simply generalized special generic maps. For example, we consider the situation in the following.
\begin{itemize}
\item A smooth immersion $\bar{f}:\bar{N} \rightarrow N$ of an $n$-dimensional compact and simply-connected manifold $\bar{N}$ is an embedding.
\item $\bar{N}:=S^k \times D^1$ and $N:={\mathbb{R}}^{k+1}$ with $k \geq 2$ and $n=k+1$.  
\end{itemize} 
We easily have a smooth embedding of $\bar{N}$ into $N$ here. For our desired map here, first consider a smooth function on $S^3$ whose image is a closed interval in $\mathbb{R}$ and whose singular set is mapped onto the boundary of the image. Furthermore, the function is chosen as a Morse-Bott function such that the preimage of a point in the interior of the image is diffeomorphic to $F$. We consider the product map of this function and the identity map on $S^k$. After that, we embed the image via the embedding $\bar{f}$ to have our desired map.

We can argue similarly in the case where $F$ is a closed, connected and orientable surface of genus $g>1$ to have a desired map on $S^k \times S^3$. 
\end{Ex}

We explain about another exposition on the proof and ${S^2}_{(l)} \times [-1,1]$ after the earlier version \cite{kitazawa10}.

\begin{Rem}
\label{rem:2}
 Let $l=2$. In this case, ${S^2}_{(l)} \times [-1,1]$ is, after the corner is eliminated, diffeomorphic to $D^2 \times S^1$. Assume that for $l:=l_0 \geq 2$, this is diffeomorphic to a manifold represented as a boundary connected sum of $l_0-1$ copies of $D^2 \times S^1$ taken in the smooth category, after the corner is eliminated. The manifold we consider is obtained by removing the interior of a small regular neighborhood (considered in the smooth category) of a closed interval embedded smoothly and properly in the manifold in the case $l=l_0$. In other words, the boundary of the closed interval is smoothly embedded in the boundary and the interior is embedded in the interior and the condition on the transversality is satisfied. 
	Furthermore, the embedding is taken as a smooth embedding smoothly homotopic to a constant map whose values are always one point in the boundary $\partial {S^{2}}_{(l_0)} \times [-1,1]$ of the manifold ${S^{2}}_{(l_0)} \times [-1,1]$ for the case $l=l_0$. We can also choose the smooth homotopy $e:D^1 \times [0,1] \rightarrow {S^{2}}_{(l_0)} \times [0,1]$ so that the restriction to $D^1 \times [0,1)$ and that to each $D^1 \times \{t^{\prime}\}$ for $t^{\prime} \in [0,1)$ are smooth embeddings. In other words, this explains about the property that the embedding of the closed interval is "unknotted" in the smooth category.
	We can understand that our manifold is diffeomorphic to a manifold represented as a boundary connected sum of $l_0=l-1$ copies of $D^2 \times S^1$ taken in the smooth category.
\end{Rem}

In addition, our arguments also yield the following.

\begin{MainThm}
\label{mthm:3}
We can extend Theorem \ref{thm:3} to the case satisfying the following naturally generalized conditions.
\begin{enumerate}
\item $m=n+k_0$ where $k_0>2$ is an integer.
\item $F$ is a smooth closed and connected manifold represented as a connected sum of finitely many copies of $S^1 \times S^{k_0-1}$ taken in the smooth category and not a sphere.
\end{enumerate}
\end{MainThm}

We present another phenomenon on SGL maps. 
We explain about simply generalized special generic maps in Example \ref{ex:2} (\ref{ex:2.2}) and Remark \ref{rem:1} again. In our arguments here, as the coefficient ring $A$, we take a principal ideal domain having the unique identity element different from the zero element. According to our main work of \cite{kitazawa9}, as Remark \ref{rem:1} presents shortly, we can easily construct a natural simply generalized special generic map on some suitable $m$-dimensional closed and connected manifold $M$ such that in Example \ref{ex:2} (\ref{ex:2.2}), for an arbitrary integer $1 \leq j \leq l+1$, we can choose $j_p:=j$ for some point $p \in \partial \bar{N}$ (if the boundary of the manifold $\bar{N}$ has sufficiently many connected components). According to our construction, the resulting reference homomorphisms along the curves between the homology groups of degrees at least $1$ are always the zero homomorphisms. This is also due to the structure of the (co)homology group of the product of spheres, understood by so-called K\"unneth theorem for example. See \cite{hatcher} again. 
K\"unneth theorem also shows the following.
\begin{itemize}
\item We can choose the cohomology duals to the elements of the homology groups being also the generators of the kernels of the presented reference homomorphisms along the curves and represented by the spheres in the product of the spheres, identified with $F$, in Example \ref{ex:2} (\ref{ex:2.2}), respecting some suitable basis. For this, universal coefficient theorem is important for example. See \cite{hatcher} again. 
\item The set of these cohomology duals respecting the basis generates the cohomology ring of $F$.
\end{itemize} 
In other words, we may expect the following phenomenon: these two conditions on the reference homomorphisms along the curves and the structure of the cohomology ring of $F$ induce the fact that the reference homomorphisms along the curves between the homology groups of degrees at least $1$ are always the zero homomorphisms.

\begin{Prob}
\label{prob:3}
Are phenomena as presented here observed in general? 
\end{Prob}
\begin{Ex}
\label{ex:5}
Maps in Theorem \ref{thm:3} and Main Theorem \ref{mthm:3} also support Problem \ref{prob:3}.
\end{Ex}
We give a counterexample.
\begin{MainThm}
	\label{mthm:4}
The answer to Problem \ref{prob:3} is false in general.	
\end{MainThm}
\begin{proof}
Let $k>1$ be an integer. Let $A:=\mathbb{Z}/k\mathbb{Z}$, which is the naturally defined quotient ring of the ring $\mathbb{Z}$ and also a cyclic group of order $k$ under the products for the quotient ring.
We need the notions of the {\it Euler number} and, more generally, the {\it Euler class} of a linear bundle and for them, see \cite{milnorstasheff} for example. 
This explains about classical theory of linear bundles systematically and essentially same notions are presented.
In our case or the case of linear bundles over closed, connected and orientable surfaces whose fibers are $S^1$ or $D^2$, Euler numbers and Euler classes are essentially same by fundamental algebraic topology such as Poincar\'e duality for the surfaces.

We can consider a linear bundle ${\tilde{M}}^{4,k}$ over $S^2$ whose fiber is the unit disk $D^2$ and whose subbundle $\partial {\tilde{M}}^{4,k}$ obtained by restricting the fiber to $\partial D^2 \subset D^2$ is of Euler number $k$.
By some fundamental theory on construction of special genric maps of \cite{saeki1}, we have a smooth map ${\tilde{f}}_{4,k}: {\tilde{M}}^{4,k} \rightarrow S^2 \times \mathbb{R}$ enjoying the following properties.
\begin{itemize}
\item The image is denoted by $S^2 \times [a,b] \subset S^2 \times \mathbb{R}$.
\item The singular set is mapped onto $S^2 \times \{b\}$.
\item The preimage of $S^2 \times \{a\}$ and the boundary coincide.
\item Around each point of $S^2 \times \{b\}$, it is locally represented as the product map of a height function on a copy of the unit disk $D^2$ and the identity map on a $2$-dimensional disk.  More rigorously, on an open neighborhood $U \subset S^2$ diffeomorphic to the interior
 of the unit disk $D^2$, the restriction of ${\tilde{f}}_{4,k}$ to the preimage of $U \times [a,b]$, we have some diffeomorphism ${\Phi}_U:{{\tilde{f}}_{4,k}}^{-1}(U \times [a,b]) \rightarrow U \times D^2$ and the relation 
$({\rm id}_{U} \times {\tilde{h}}_{2, [a,b]}) \circ {\Phi}_U={\tilde{f}}_{4,k}: {{\tilde{f}}_{4,k}}^{-1}(U \times [a,b]) \rightarrow U \times [a,b]$ where the notation is as follows.
\begin{itemize}
\item ${\rm id}_{U}$ is the identity map on $U$. 
\item ${\tilde{h}}_{2,[a,b]}$ is a (suitably scaled) height function on (a copy of) $D^2$.
\end{itemize}
\end{itemize}
An argument essentially same as this is also important in Theorem 5.7 (3) of our preprint \cite{kitazawasaeki} for example. 

By composing the resulting map into $S^2 \times \mathbb{R}$ with the projection to $\mathbb{R}$ gives a function suitable for ${\tilde{f}}_{p, \partial W_f}$ in Definition \ref{def:3}. In Definition \ref{def:3}, $E_p={\tilde{M}}^{4,k}$ and $F$ is diffeomorphic to $\partial {\tilde{M}}^{4,k}$ of course. We can construct an SGL map as in Remark \ref{rem:1}.

We have the homology exact sequence for the pair $({\tilde{M}}^{4,k}, \partial {\tilde{M}}^{4,k})$. See \cite{hatcher} again for the sequence. By using some theory such as homology groups of closed, connected and orientable manifolds here and Poincar\'e duality, we can find isomorphisms, represented via "$\cong$", in the sequence
$$\rightarrow H_3(\partial {\tilde{M}}^{4,k};A) \cong A \rightarrow H_3( {\tilde{M}}^{4,k};A) \cong \{0\} \rightarrow H_3({\tilde{M}}^{4,k},\partial {\tilde{M}}^{4,k};A) \cong  H^1({\tilde{M}}^{4,k};A) \cong \{0\} $$ \\
$$\rightarrow H_2(\partial {\tilde{M}}^{4,k};A) \cong A \rightarrow H_2( {\tilde{M}}^{4,k};A) \cong A \rightarrow H_2({\tilde{M}}^{4,k},\partial {\tilde{M}}^{4,k};A) \cong H^2({\tilde{M}}^{4,k};A) \cong A$$ \\
$$\rightarrow H_1(\partial {\tilde{M}}^{4,k};A) \cong A \rightarrow H_1( {\tilde{M}}^{4,k};A) \cong \{0\} $$ \\
and the inclusion induces an isomorphism between $H_2(\partial {\tilde{M}}^{4,k};A)$ and $H_2( {\tilde{M}}^{4,k};A)$. 

The cohomology ring $H^{\ast}(\partial {\tilde{M}}^{4,k};A)$ of $\partial {\tilde{M}}^{4,k}$ is known to be generated by an element of $H^{1}(\partial {\tilde{M}}^{4,k};A)$. Furthermore, $H_{j}(\partial {\tilde{M}}^{4,k};A)$ and $H^{j}(\partial {\tilde{M}}^{4,k};A)$ 
are known to be free for $j=0, 1, 2, 3$ and of rank $1$.
We can see that this completes the proof.
\end{proof}
\begin{Rem}
\label{rem:3}
	In our maps in our proof of Main Theorem \ref{mthm:4}, ${\pi}_1(F)$ is also known to be isomorphic to $A$. So we easily have cases for Theorem \ref{thm:2} if other objects are suitably given where "$k=1$ in Theorem \ref{thm:2}".
\end{Rem}
\begin{Ex}
\label{ex:6}
In the case $F:=\partial {\tilde{M}}^{4,k}$ in Main Theorem \ref{mthm:4}, we consider the situations in the following two. In each situation, our smooth immersion $\bar{f}:\bar{N} \rightarrow N$ of an $n$-dimensional compact and simply-connected manifold $\bar{N}$ can be chosen as a suitable embedding.
\begin{enumerate}
\item Let $\bar{N}:=D^n$ and $N:={\mathbb{R}}^{n}$. We easily have a smooth embedding $\bar{f}$ of $\bar{N}$ into $N$ here. For our desired map here, first we can consider a smooth function on the manifold ${\tilde{M}}^{4,k}$ whose image is a closed interval in $\mathbb{R}$ and whose singular set is mapped onto the boundary of the image. 
We respect ${\tilde{f}}_{p, \partial W_f}$ in the proof of Main Theorem \ref{mthm:4} to construct this function and we can do this. We can construct as in Remark \ref{rem:1} and we can do in such a way that a section of the linear bundle over $S^2$, more precisely, an element represented by this, can give a generator of $H_2( {\tilde{M}}^{4,k};A) \cong A$ in the proof.
\item
Let $\bar{N}:=S^{k_0} \times D^1$ and $N:={\mathbb{R}}^{k_0+1}$ where $k_0 \geq 2$ is an integer with $n=k_0+1$. This is similar to Example \ref{ex:4}. We easily have a smooth embedding $\bar{f}$ of $\bar{N}$ into $N$ here. For our desired map here, first we can consider a smooth function on the manifold ${\tilde{M}}^{4,k}$ whose image is a closed interval in $\mathbb{R}$ and whose singular set is mapped onto the boundary of the image as before. 
We respect ${\tilde{f}}_{p, \partial W_f}$ in the proof of Main Theorem \ref{mthm:4} to construct this function and we can do this as before. We consider the product map of this function and the identity map on $S^{k_0}$. After that, we embed the image via the embedding $\bar{f}$ to have our desired map.

We consider natural construction here. In this construction, the manifold of the domain can be determined by using some fundamental theory of linear bundles. In the case $k$ is even, the manifold of the domain is diffeomorphic to the product of $S^{k_0}$ and $S^2 \times S^2$. In the case $k$ is odd, the manifold of the domain is diffeomorphic to the product of $S^{k_0}$ and the total space of a linear bundle over $S^2$ whose fiber is the unit sphere $S^2$ and which is not trivial. Sections of the linear bundles over $S^2$, more precisely, elements represented by the sections, can give generators of $H_2( {\tilde{M}}^{4,k};A) \cong A$ in the proof.
\end{enumerate}
\end{Ex}
\begin{MainThm}
	\label{mthm:5}
	For SGL maps constructed in Theorem \ref{thm:3} under the constraint that the 2nd integral homology group $H_2(\bar{N};\mathbb{Z})$ is free and Example \ref{ex:6} for example, the 2nd integral
 homology group $H_2(M;\mathbb{Z})$ of the closed and simply-connected manifold $M$ of the domain is free.

More generally, we consider our construction of Theorem \ref{thm:2} satisfying the following conditions in the case $k=1$ there. This situation generalizes the explicit situations above.
\begin{enumerate}
\item
\label{mthm:5.1}
$F$ is connected and the 2nd homotopy group ${\pi}_2(F)$ is the trivial group. 
\item
\label{mthm:5.2}
 For any point $p \in \partial W_f$, ${q_f}^{-1}(p)$ is connected and at most $k^{\prime}$-dimensional with $k^{\prime}<m-n$.
\item
\label{mthm:5.3}
The 2nd integral homology group $H_2(W_f;\mathbb{Z})$ is free.
\end{enumerate}
Then the 2nd integral
 homology group $H_2(M;\mathbb{Z})$ of the closed and simply-connected manifold $M$ of the domain is free.
	\end{MainThm}

\begin{proof} 
Our construction of Theorem \ref{thm:3} and Example \ref{ex:6} shows that the condition (\ref{mthm:5.2}) holds. For our construction in Theorem \ref{thm:3} under
 the constraint that the 2nd integral homology group $H_2(\bar{N};\mathbb{Z})$ is free and Example \ref{ex:6}, we can easily see that the conditions (\ref{mthm:5.1}) and  (\ref{mthm:5.3}) hold.

Assume that the 2nd integral
 homology group $H_2(M;\mathbb{Z})$ of the closed and simply-connected manifold $M$ of the domain is not free. We can take an element which is not the zero element and which is of a finite order. This is represented by
 a $2$-dimensional sphere $S_0$ smoothly embedded in the total space ${q_f}^{-1}(W_f-{\rm Int}\ N(\partial W_f))$ of a smooth bundle over $W_f-{\rm Int}\ N(\partial W_f)$ in Definition \ref{def:3} by Hurewicz theorem. This is due to the fact that the dimension of the singular set is calculated as $k^{\prime}+(n-1)=(m-n-1)+n-1=m-2$, followed from the condition (\ref{mthm:5.2}).
We apply the homotopy exact sequence for the bundle ${q_f}^{-1}(W_f-{\rm Int}\ N(\partial W_f))$ over $W_f-{\rm Int}\ N(\partial W_f)$ whose fiber is diffeomorphic to $F$ with the conditions (\ref{mthm:5.1}) and (\ref{mthm:5.3}).
We can see that the 2nd homotopy group of ${q_f}^{-1}(W_f-{\rm Int}\ N(\partial W_f))$ is free and isomorphic to that of $W_f-{\rm Int}\ N(\partial W_f)$. 
This isomorphism is induced from the projection of the bundle. This contradicts the condition on the homotopy on $S_0$ and $M$.

This completes the proof.
\end{proof}

According to \cite{nishioka}, a compact and connected manifold $X$ whose dimension is greater than $3$, whose boundary is not empty and whose 1st homology group $H_1(X;A)$ is orientable and the $j$-th homology group $H_{j}(X;A)$ is free for $j \geq \dim X-2$ for any commutative ring $A$ where the homology groups are seen as modules over $A$. In the case $\dim X \leq 4$ here, $H_j(X;A)$ is free as a module over $A$ for $j \geq 0$. See Lemma 3.1 of \cite{nishioka}.
This is used in determining $5$-dimensional closed and simply-connected manifolds admitting special generic maps into ${\mathbb{R}}^4$ by applying the case $\dim X=4$ and the classification of these manifolds in \cite{barden}.
Nishioka's answer is same as Theorem \ref{thm:1} (\ref{thm:1.2}) shows. Barden's answer shows that classifications of $5$-dimensional closed and simply-connected manifolds are same in the smooth category, the PL category (piecewise smooth category), and the smooth category. 

We go back to our main arguments. By our arguments, we have the following. We can check easily.

\begin{Cor}
In Main Theorem \ref{mthm:5}, the condition {\rm (}\ref{mthm:5.3}{\rm )} automatically holds in the case the dimension $n$ is smaller than or equal to $4$ in the situation of Theorem \ref{thm:2}.
\end{Cor}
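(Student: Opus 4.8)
The plan is to reduce the statement to the low-dimensional part of Nishioka's Lemma~3.1 of \cite{nishioka}, recalled just above, which asserts that a compact and connected manifold $X$ of dimension at most $4$ with nonempty boundary and with $H_1(X;A)$ orientable has $H_j(X;A)$ free as an $A$-module for all $j\ge 0$ and every commutative ring $A$. First I would record the structural facts about $W_f$. In the situation of Theorem~\ref{thm:2} with $k=1$, the base manifold $W_f$ is $1$-connected, hence simply-connected; in particular $H_1(W_f;\mathbb{Z})=0$ and $W_f$ is orientable, while compactness and connectedness are part of the hypotheses. Thus the hypotheses of Nishioka's lemma are in place once $\partial W_f\neq\emptyset$ and $n=\dim W_f\le 4$.

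With this in hand the main case is immediate: when $\partial W_f\neq\emptyset$ and $n\le 4$, the lemma gives that $H_j(W_f;\mathbb{Z})$ is free for every $j$, so in particular condition (\ref{mthm:5.3}), the freeness of $H_2(W_f;\mathbb{Z})$, holds automatically. If one prefers an argument from scratch in the critical dimension $n=4$, one can compute the torsion of $H_2(W_f;\mathbb{Z})$ by the universal coefficient theorem as the torsion of $H^3(W_f;\mathbb{Z})$, identify the latter with $H_1(W_f,\partial W_f;\mathbb{Z})$ via Lefschetz duality, and read off from the long exact sequence of the pair $(W_f,\partial W_f)$ together with $H_1(W_f;\mathbb{Z})=0$ that $H_1(W_f,\partial W_f;\mathbb{Z})\cong\widetilde{H}_0(\partial W_f;\mathbb{Z})$ is free; hence the torsion of $H_2(W_f;\mathbb{Z})$ vanishes.

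Second, I would dispose of the boundaryless case $\partial W_f=\emptyset$ separately, since Nishioka's lemma presupposes a nonempty boundary. Here $W_f$ is a closed, simply-connected and orientable manifold of dimension $n\le 4$, and ordinary Poincar\'e duality together with the universal coefficient theorem identifies the torsion of $H_2(W_f;\mathbb{Z})$ with the torsion of a lower-degree homology group which vanishes by simple-connectivity; so $H_2(W_f;\mathbb{Z})$ is free again. The main obstacle I anticipate is thus not one of genuine depth but purely of bookkeeping on the hypotheses: I must confirm that orientability follows from simple-connectivity, that the boundary hypothesis of Nishioka's lemma is legitimately available in the constructions of Theorem~\ref{thm:3} and Example~\ref{ex:6} (where the singular set, and hence $\partial W_f$, is nonempty), and that the closed case is covered by the separate duality argument above.
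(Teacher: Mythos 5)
Your proposal is correct and follows essentially the same route as the paper, which offers no explicit argument beyond pointing to Nishioka's Lemma~3.1 (recalled in the paragraph immediately preceding the Corollary) applied to the compact, simply-connected $W_f$ of dimension $n\le 4$. Your supplementary duality computation and your separate treatment of the case $\partial W_f=\emptyset$ are sound additions but do not change the underlying approach.
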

Our last result is on singularity of resulting SGL maps. We need the notion of a {\it fold} map, presented shortly in the proof of Theorem \ref{thm:3}.
\begin{MainThm}
	\label{mthm:6}
	For SGL maps constructed in Theorem \ref{thm:3} and Main Theorem \ref{mthm:4}, if we restrict the maps to small open neighborhoods of singular points, then the resulting maps are represented as the compositions of fold maps. Here, projections, height functions, and Morse functions, are also fold maps. 
	\end{MainThm}
\section{Acknowledgement}
The author would like to thank Takahiro Yamamoto for related discussions on our previous study \cite{kitazawa9}, especially, on the terminology "generalized special generic maps" and meanings of our studies in the singularity theory of differentiable maps and applications to geometry of manifolds. These discussions have motivated the author to study further and contributed to our present study.	
	
	\end{document}